
\documentclass[letterpaper, 10 pt, conference]{ieeeconf}  
\usepackage{mathalfa}
\usepackage{color}
\usepackage{tikz}
\usepackage{booktabs}
\usepackage{xcolor}
\usepackage{units}
\usepackage[thinlines]{easytable}

\IEEEoverridecommandlockouts                              

\overrideIEEEmargins                                      



\usepackage{graphicx} 
\usepackage{amsmath, amssymb}

\newtheorem{definition}{Definition}

\title{\LARGE \bf
Optimal Control of Nonholonomic Systems via Magnetic Fields*
}

\author{Maria Oprea$^{1}$, Max Ruth$^{1}$, Dora Kassabova$^{2}$, and William Clark$^{2}$
\thanks{*This work was funded by NSF grant DMS-1645643.
M. Oprea was supported by the Army Research Office Biomathematics Program Grant W911NF-18-1-0351.
M. Ruth was supported by the National Science Foundation Graduate Research Fellowship under Grant No. DGE-1650441.}
\thanks{$^{1}$M. Oprea and M. Ruth are with the Center for Applied Mathematics, Cornell University, Ithaca, NY 14850, USA {\tt\small mao237@cornell.edu, mer335@cornell.edu}}%
\thanks{$^{2}$D. Kassabova and W. Clark are with the Department of Mathematics, Cornell University, Ithaca, NY 14850, USA {\tt\small dmk285@cornell.edu, wac76@cornell.edu}}
}

\newcommand{\abs}[1]{\left| #1 \right|}

\newtheorem{proposition}{Proposition}
\newtheorem{corollary}{Corollary}

\begin{document}

\maketitle
\thispagestyle{empty}
\pagestyle{empty}

\begin{abstract}
Geometric optimal control utilizes tools from differential geometry to analyze the structure of a problem to determine the control and state trajectories to reach a desired outcome while minimizing some cost function. For a controlled mechanical system, the control usually manifests as an external force which, if conservative, can be added to the Hamiltonian. 
In this work, we focus on mechanical systems with controls added to the \textit{symplectic form} rather than the Hamiltonian. In practice, this translates to controlling the magnetic field for an electrically charged system. We develop a basic theory deriving necessary conditions for optimality of such a system subjected to nonholonomic constraints.
We consider the representative example of a magnetically charged Chaplygin Sleigh, whose resulting optimal control problem is completely integrable.
\end{abstract}

\section{INTRODUCTION}

Optimal control of mechanical systems has been extensively studied in the past \cite{pontryagin,https://doi.org/10.1002/zamm.19630431023,lee,Bloch1994ReductionOE}. Furthermore, it is commonly known that deforming the canonical symplectic form by incorporating a magnetic field results in a new symplectic form \cite{tony,marsden}. 
In physical application, magnetic fields are used in a variety of control problems; for example, magnetic fields in combination with electric quadrupoles are commonly used for confinement of particles in Penning traps, e.g.~for both quantum trapping \cite{major_charged_2005,perez-rios_how_2013,brif_control_2010,deffner_optimal_2014}, and for mass spectrometry \cite{martikyan_application_2021}. 
Another application is the manipulation and navigation of micro-robots in different fluid environments; magnetic fields are a popular external actuation tool as they allow fuel-free remote control and a high degree of programmability \cite{KOLEOSO2020100085,biomedical,10.3389/frobt.2022.834177,https://doi.org/10.1002/aisy.202000267,doi:10.1021/acs.chemrev.0c01234}. While newer technologies have led to an increase in magnetic experimental work, the theory on optimal control of magnetic systems is scarce and underdeveloped.

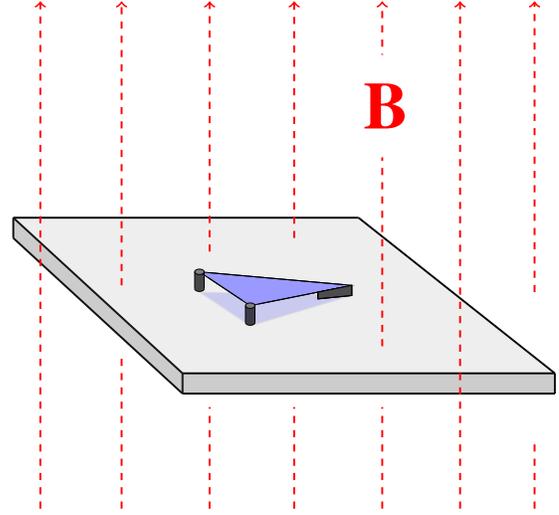
\begin{figure}
    \centering
    \scalebox{0.9}{
    	\begin{tikzpicture}\label{Dora's cool pic}
    		
    		
    		\draw[thick](0,0.3) -- (0,0);
    		\draw[thick](2.5,-2) -- (2.5,-2.3);
    		\draw[thick](8,-2) -- (8,-2.3);
    		
    		\draw [draw=gray, fill=black, opacity=0.2]
    		(2.5,-2.3) -- (8,-2.3) -- (8,-2) -- (2.5,-2) -- cycle;
    		
    		\draw[thick](0,0.3) -- (5.1,0.3);
    		\draw[thick](0,0.3) -- (2.5,-2);
    		\draw[thick](5.1,0.3) -- (8,-2);
    		\draw[thick](2.5,-2) -- (8,-2);
    		
    		\draw [draw=gray, fill=black, opacity=0.07]
    		(0,0.3) -- (5.1,0.3) -- (8,-2) -- (2.5,-2) -- cycle;
    		
    		\draw[thick](0,0) -- (2.5,-2.3);
    		\draw[thick](2.5,-2.3) -- (8,-2.3);
    		
    		\draw [draw=gray, fill=black, opacity=0.2]
    		(0,0.3) -- (2.5,-2) -- (2.5,-2.3) -- (0,0) -- cycle;
    		
    		
    		\draw[dashed,->, color = red,thick]  (0.4,-4) -- (0.4,3.5);
    		
    		
    		\draw[dashed, color = red,thick]  (7.7,-4) -- (7.7,-3);
    		\draw[dashed, color = red,thick]  (5.45,-4) -- (5.45,-2.5);
    		\draw[dashed,color = red,thick]  (4.15,-4) -- (4.15,-2.5);
    		\draw[dashed, color = red,thick]  (2.9,-4) -- (2.9,-2.5);
    		\draw[dashed,color = red,thick]  (1.6,-4) -- (1.6,-1.7);
    		
    		
    		\draw[dashed,->, color = red,thick]  (7.7,-0.8) -- (7.7,3.5);
    		\draw[dashed,->, color = red,thick]  (6.6,-4) -- (6.6,3.5);
    		\draw[dashed,->, color = red,thick]  (5.45,-1.6) -- (5.45,3.5);
    		\draw[dashed,->, color = red,thick]  (4.15,0.0) -- (4.15,3.5);
    		\draw[dashed,->, color = red,thick]  (2.9,-0.2) -- (2.9,3.5);
    		\draw[dashed,->, color = red,thick]  (1.6,-0.7) -- (1.6,3.5);

    		\draw [draw= white, fill=white, opacity=1]
    		(6.5,2.7) -- (6.5,1.2) -- (5,1.2) -- (5,2.7) -- cycle;
    		
    		\node[text = red, scale = 2.8] at (5.5,1.95) {\textbf{B}};

    		\node (r0) at ( 3.5,  -1) {}; 
    		\node (s0) at (2.75, -0.5) {}; 
    		\node (s1) at ( 5, -0.70) {}; 
    		
    		\draw [draw=gray, fill=blue, opacity=0.15]
    		(3.5,  -1.3) -- (2.75, -0.8) -- (5, -0.85) -- cycle;
    		
    		\draw [draw=gray, fill=white, opacity=1]
    		(3.5,  -1) -- (2.75, -0.5) -- (5, -0.7) -- cycle;
    		
    		\fill[fill=blue!40] (r0.center)--(s0.center)--(s1.center);
    		\path[draw] (r0)--(s0);
    		\path[draw] (s0)--(s1);
    		\path[draw] (s1)--(r0);
    		
    		\draw (3.5,  -1)--(2.75, -0.5);
    		\draw (2.75, -0.5)--( 5, -0.70);
    		\draw ( 5, -0.70)--(3.5,  -1);
    		
    		\draw [black,fill=black!50] ( 3.5,  -1) ellipse (0.07 and 0.04);
    		\draw (3.43,-1) -- (3.43,-1.25);
    		\draw (3.43,-1.25) arc (180:360:0.07 and 0.04);
    		\draw (3.57,-1.25) -- (3.57,-1);  
    		\fill [black,opacity=0.7] (3.43,0) -- (3.43,-1.25) arc (180:360:0.07 and 0.04) -- (3.57,-1) arc (0:180:0.07 and -0.04);
    		
    		\draw [black,fill=black!50] (2.75, -0.5) ellipse (0.07 and 0.04);
    		\draw (2.68,-0.5) -- (2.68,-0.75);
    		\draw (2.68,-0.75) arc (180:360:0.07 and 0.04);
    		\draw (2.82,-0.75) -- (2.82,-0.5);  
    		\fill [black,opacity=0.7] (2.68,-0.5) -- (2.68,-0.75) arc (180:360:0.07 and 0.04) -- (2.82,-0.5) arc (0:180:0.07 and -0.04);
    		
    		\draw [draw=black, fill=white, opacity=1]
    		(4.5,-0.9) -- (5,-0.85) -- (5,-0.7) -- (4.5,-0.80) -- cycle;
    		
    		\draw [draw=black, fill=black, opacity=0.7]
    		(4.5,-0.9) -- (5,-0.85) -- (5,-0.7) -- (4.5,-0.80) -- cycle;

    		\draw[thin](4.5,-0.9) -- (5,-0.85);
    		\draw[thin](4.5,-0.9) -- (4.5,-0.80);
    		\draw[thin](5,-0.85) -- (5,-0.7);
    \end{tikzpicture}}
    \caption{The Chaplygin sleigh subject to the left-invariant magnetic field, $\mathcal{B} = Bdx_c\wedge dy_c$.}
	\label{fig:Dora's cool pic}
\end{figure}

Typically, when studying optimal control for Hamiltonian systems, controls, denoted by $u$, are implemented via an external force accompanying the Hamiltonian \cite{pontryagin,https://doi.org/10.1002/zamm.19630431023,lee,Bloch1994ReductionOE}:

\begin{equation*}
    i_X\omega = dH + \pi_Q^*F(u),
\end{equation*}
which can be directly incorporated to the Hamiltonian if conservative, $\tilde{H}(u) = H + V(u)$.

In this paper we approach the optimal control problem through the lens of distorting the symplectic form denoted by $\omega$. Rather than manipulating the Hamiltonian, here we impose the controls by directly manipulation the symplectic form, i.e.
$\omega: \mathcal{U} \rightarrow \Omega^2(T^*Q)$ is a two form valued function on $\mathcal{U}$, the space of admissible controls. In practice, changes in $\omega$ can be achieved by acting on the system with a magnetic field which can be viewed as a closed two-form on the configuration space $Q$. 

In formulating our optimal control problem, we will consider systems with nonholonomic constraints which are assumed to be linear in the velocities. In particular we will illustrate this theory in the famous case of the Chaplygin sleigh in the plane.
Using the developed theory, we are able to determine the conditions necessary to reverse the velocity of the sleigh. It turns out that this problem is completely integrable independent on the choice of cost function. 




The equations of motion for the symplectically controlled nonholonomic system are 
\begin{equation*}
    i_X\omega_u = dH + \lambda^i\eta_i,
\end{equation*}
where $\eta^\alpha\in\Omega^1$ are the constraints and the multipliers, $\lambda_\alpha$, are chosen to enforce these constraints. The optimal control problem is to determine the control trajectory in $\mathcal{U}$ such that the following integral is minimized,
\begin{equation*}
    J = \min_u \, \int_0^T \, \ell(x, u) \, dt. 
\end{equation*}


The goal of this work is two-fold. First, we present the theory for control of magnetic mechanically controlled systems. This is accomplished by deriving the equations of motion and the conservation laws associated with these dynamics. Second, we apply optimal control theory, specifically Pontraygin's maximum principle, to these systems with specific attention drawn to the electrically charged Chaplygin sleigh subjected to an external magnetic field.

The dynamics of nonholonomic systems subject to a magnetic field are developed in \S\ref{sec:dynamics} along with the result that energy is always preserved in such systems. The optimal control problem is addressed in \S\ref{sec:control} where it is shown that the original energy of the system and the optimal control Hamiltonian always Poisson commute. These results are applied to the Chaplygin sleigh in \S\ref{sec:chap}. Numerical results from this example are presented in \S\ref{sec:results}. Conclusions and future work is in \S\ref{sec:conclusions}.


\section{MAGNETIC NONHOLONOMIC SYSTEMS} \label{sec:dynamics}


There has been research done on connecting optimal control theory and magnetic systems, \cite{tony}. However, to our knowledge there is a lack of treatment where the magnetic field is the control parameter.
We start by introducing the standard symplectic magnetic field representation, and then develop the resulting equations of motion. Finally, we show that the total energy is always conserved under the controlled motion.

Let $Q$ be an $n$ dimensional manifold and let $\pi_Q:T^*Q\to Q$ be the cotangent bundle projection.
Let $H:T^*Q\to \mathbb{R}$ be a natural Hamiltonian, i.e. it is of the form kinetic plus potential energy.
Let the system be subjected to $k<n$ linear nonholonomic constraints; this restricts the dynamics to the nonintegrable distribution $\mathcal{D}\subset TQ$. As these constraints are linear, we can (locally) find 1-forms $\eta_i$ for $i \in \{1, \dots, k\}$ such that the span of $\eta_i$ annihilates $\mathcal{D}$ i.e. for all vectors $v \in \mathcal{D}$, $\eta_i(v) = 0$.

Moreover, assume the system is under the influence of a magnetic field given by a closed 2-form $\mathcal{B} \in \Omega^2(Q)$. In coordinates, we can represent $\mathcal{B}$ as an antisymmetric $n\times n$ matrix with entries $B_{ij}$. Under these assumptions, the equations of motion are given by \cite{https://doi.org/10.48550/arxiv.1410.5682}, \cite{symmetry} \color{black}:
\begin{equation}\label{eq:eom_general_hamiltonian}
\begin{split}
     i_{X_\mathcal{B}}\omega_\mathcal{B} &= dH + \lambda^i\pi^*_Q\eta_i,\\
     \omega_\mathcal{B} &= \omega + \pi^*_Q \mathcal{B},
\end{split}
\end{equation}
where $\omega = dq^i\wedge dp_i$ is the canonical symplectic form.
Throughout this paper, Einstein summation convention will be utilized; repeated indices imply summation. 
Equation \eqref{eq:eom_general_hamiltonian} produces a vector field, $X_B = (\dot{q}, \dot{p})$, which lies tangent to the induced co-distribution $\mathcal{D}' = \mathbb{F}H^{-1}(\mathcal{D})\subset T^*Q$.

In the special case of left-invariant systems, magnetic Lie-Poisson reduction can be implemented, cf. Chapter 7 in \cite{marsden}. Let $G$ be a Lie group, $H:T^*G\to\mathbb{R}$ a left-invariant Hamiltonian, $\mathcal{D}\subset TG$ a left-invariant constraint distribution, and $\mathcal{B}\in \Omega^2(G)$ a left-invariant and closed 2-form. Then the nonholonomic magnetic Lie-Poisson equations are given by
\begin{equation}\label{eq:magnetic_Lie-Poisson}
    \dot{\mu} = \mathrm{ad}^*_{dh}\mu + \mathcal{B}_e(dh,\cdot) + \lambda^i\eta_i,
\end{equation}
where $h = H|_{\mathfrak{g}^*}$ is the restriction to the identity and $\eta_i\in\mathfrak{g}^*$ forms a basis of the annihilator to $\mathcal{D}_e\subset \mathfrak{g}$. Again, the multipliers are chosen to enforce the constraints.


For both the general magnetic nonholonomic equations of motion and their reduced counterparts, a modified 2-form was utilized by adding a closed and basic 2-form. Fortunately, this new form will always be symplectic \cite{marsden}. This closed and basic form will henceforth be called $\mathcal{B}$ and will be our control parameter. 
In many physical systems, this can be actualized by introducing a magnetic field. The special propriety of introducing control through a magnetic form is that, independent of the control, the original Hamiltonian is always preserved. 

\begin{proposition}\label{prop:energy_conservation_magnetic}
    For any $\mathcal{B} \in \Omega^2(Q)$, the Hamiltonian $H$ is preserved under the magnetic nonholonomic flow given by \eqref{eq:eom_general_hamiltonian} i.e. if $X_\mathcal{B}$ is given by \eqref{eq:eom_general_hamiltonian}, then $\mathcal{L}_{X_\mathcal{B}}H = 0$, where  $\mathcal{L}$ denotes the usual Lie derivative.
\end{proposition}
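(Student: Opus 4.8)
The plan is to compute $\mathcal{L}_{X_\mathcal{B}}H$ directly from the defining relation \eqref{eq:eom_general_hamiltonian}. Since $H$ is a function, its Lie derivative along $X_\mathcal{B}$ is just the directional derivative, $\mathcal{L}_{X_\mathcal{B}}H = dH(X_\mathcal{B}) = i_{X_\mathcal{B}}dH$. So the first step is to solve \eqref{eq:eom_general_hamiltonian} for $dH$, namely $dH = i_{X_\mathcal{B}}\omega_\mathcal{B} - \lambda^i\pi^*_Q\eta_i$, and then contract both sides once more with $X_\mathcal{B}$, so that $\mathcal{L}_{X_\mathcal{B}}H = i_{X_\mathcal{B}}(i_{X_\mathcal{B}}\omega_\mathcal{B}) - \lambda^i\, i_{X_\mathcal{B}}(\pi^*_Q\eta_i)$.

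The second step disposes of the symplectic term. Contracting $i_{X_\mathcal{B}}\omega_\mathcal{B}$ again with $X_\mathcal{B}$ gives $\omega_\mathcal{B}(X_\mathcal{B},X_\mathcal{B})$, which vanishes purely by skew-symmetry of the $2$-form $\omega_\mathcal{B}$. Note this uses nothing about nondegeneracy or about the specific magnetic deformation $\omega + \pi^*_Q\mathcal{B}$, so the control parameter $\mathcal{B}$ plays no role in this term, which is the conceptual reason the result is independent of $\mathcal{B}$.

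The third step — the only one requiring actual input from the setup — shows the constraint term contributes nothing: $\lambda^i\,(\pi^*_Q\eta_i)(X_\mathcal{B}) = \lambda^i\,\eta_i(\pi_{Q*}X_\mathcal{B}) = \lambda^i\,\eta_i(\dot q)$. Here I invoke the fact recorded just after \eqref{eq:eom_general_hamiltonian} that the multipliers $\lambda^i$ are chosen precisely so that $X_\mathcal{B}$ is tangent to the induced co-distribution $\mathcal{D}' = \mathbb{F}H^{-1}(\mathcal{D})$; pushing forward by $\pi_Q$ this says exactly that $\dot q \in \mathcal{D}$. Since by construction the $\eta_i$ annihilate $\mathcal{D}$, each $\eta_i(\dot q) = 0$, so the entire term drops, and combining the three steps gives $\mathcal{L}_{X_\mathcal{B}}H = 0$.

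I expect the last step to be the only delicate point: it is where the nonholonomic constraint genuinely enters, and it relies on the standard but essential fact that the Lagrange multipliers are exactly those enforcing $\dot q \in \mathcal{D}$; without that characterization the argument breaks. Everything else is formal manipulation of interior products. The same argument, \emph{mutatis mutandis}, applies to the reduced magnetic Lie--Poisson equations \eqref{eq:magnetic_Lie-Poisson}, with ``$\eta_i$ annihilates $\mathcal{D}_e \subset \mathfrak{g}$'' and the skew-symmetry of $\mathrm{ad}^*$ together with $\mathcal{B}_e$ playing the corresponding roles.
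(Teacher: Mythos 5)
Your proof is correct and is exactly the argument the paper compresses into one sentence: skew-symmetry of $\omega_\mathcal{B}$ kills the term $\omega_\mathcal{B}(X_\mathcal{B},X_\mathcal{B})$, and the constraint term vanishes because $(\pi_Q)_*X_\mathcal{B}\in\mathcal{D}$ is annihilated by the $\eta_i$. Your observation that only antisymmetry (not nondegeneracy) of the $2$-form is needed is a accurate refinement of the paper's appeal to ``$\omega_\mathcal{B}$ being symplectic,'' but the route is the same.
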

\begin{proof}
It follows from $\omega_\mathcal{B}$ being symplectic, and the fact that
$(\pi_Q)_*X_\mathcal{B} \in \mathrm{Ann}(\eta_i)$, for all $i \in \{1, \dots, k\}$.
\end{proof}



\section{OPTIMAL CONTROL PROBLEM}\label{sec:control}
We wish to determine optimal controls for the systems \eqref{eq:eom_general_hamiltonian} and  \eqref{eq:magnetic_Lie-Poisson}. 
Specifically, we want to solve a bounded horizon optimal control problem by minimizing the following cost functional:
\begin{equation}\label{eq:cost}
    J(B) = \int_0^T \ell(q, p,B)dt,
\end{equation}
subject to the fixed endpoints,
\begin{equation*}
    \begin{array}{cc}
    q(0) = q_0, & q(T) = q_f, \\
    p(0) = p_0, & p(T) = p_f,
    \end{array}
\end{equation*}
whose dynamics evolves according to either \eqref{eq:eom_general_hamiltonian} or  \eqref{eq:magnetic_Lie-Poisson} and  $\ell:\mathcal{D}'\times\mathcal{U}\to\mathbb{R}$ is the running cost. Recall that $B$ refers to the matrix entries of the 2-form $\mathcal{B}$ and thus one can think of $B\in \mathbb{R}^N$ for some $N$.

\begin{figure}
    \centering
    \includegraphics[scale = 0.45]{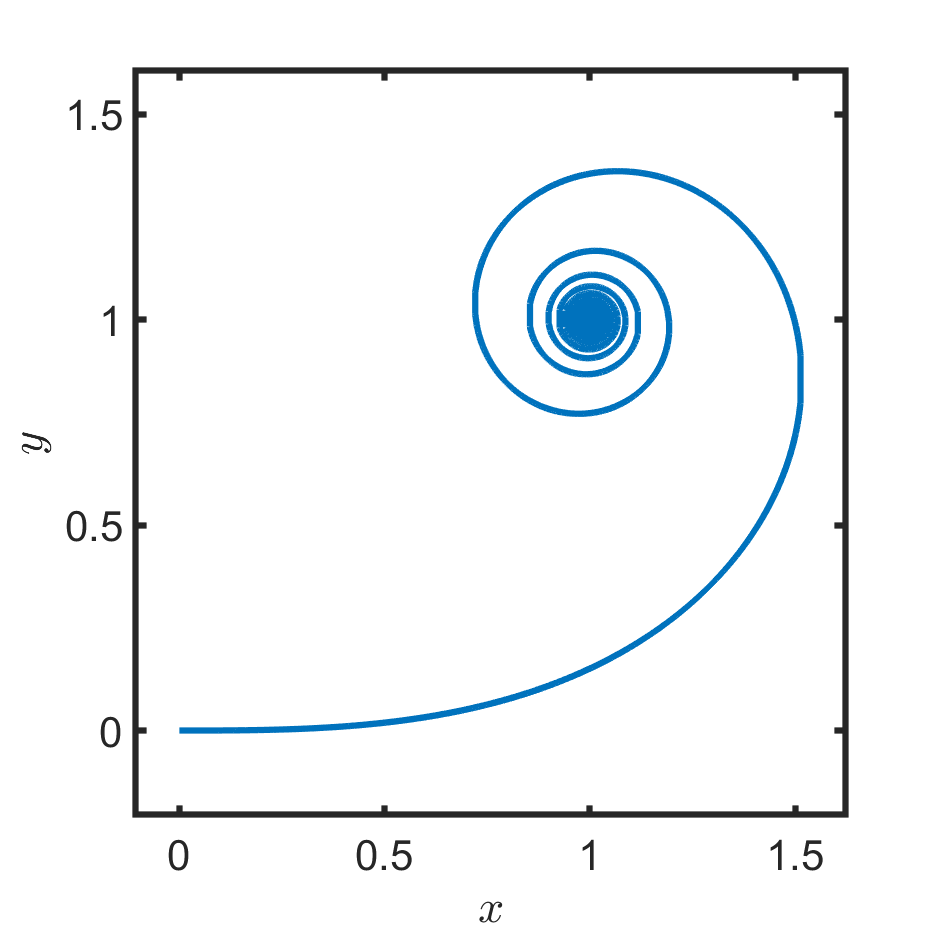}
    \includegraphics[scale = 0.45]{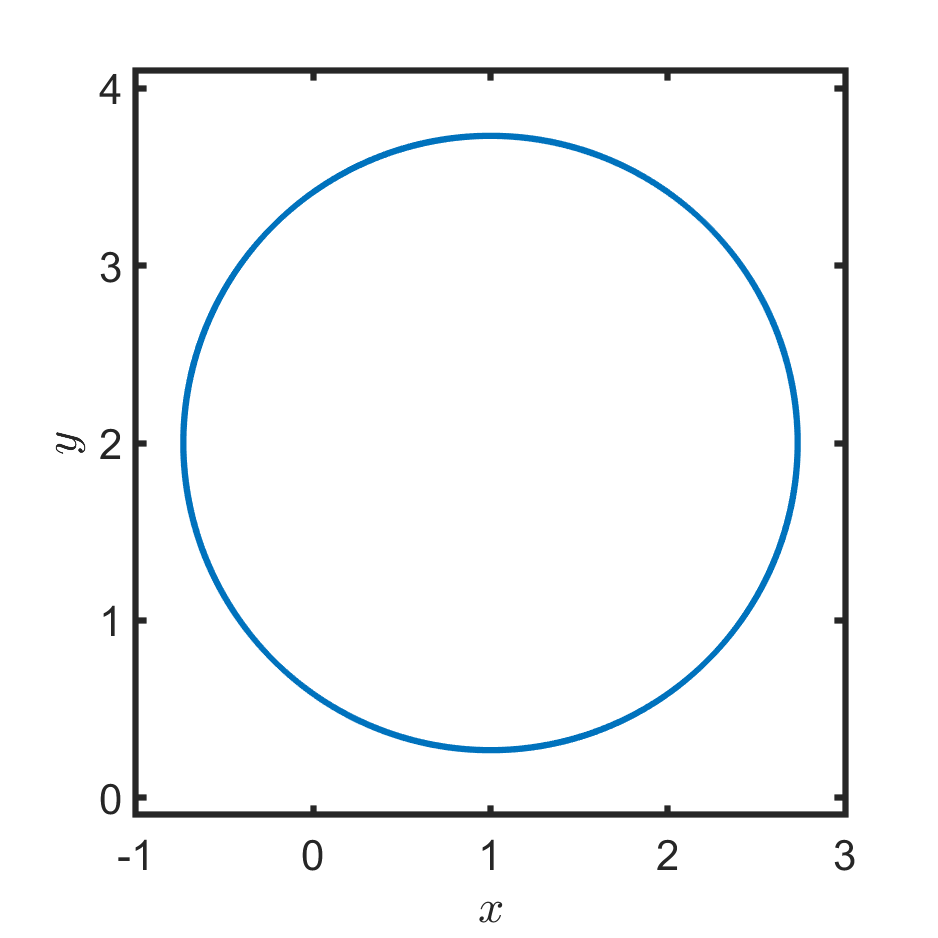}
    \includegraphics[scale = 0.45]{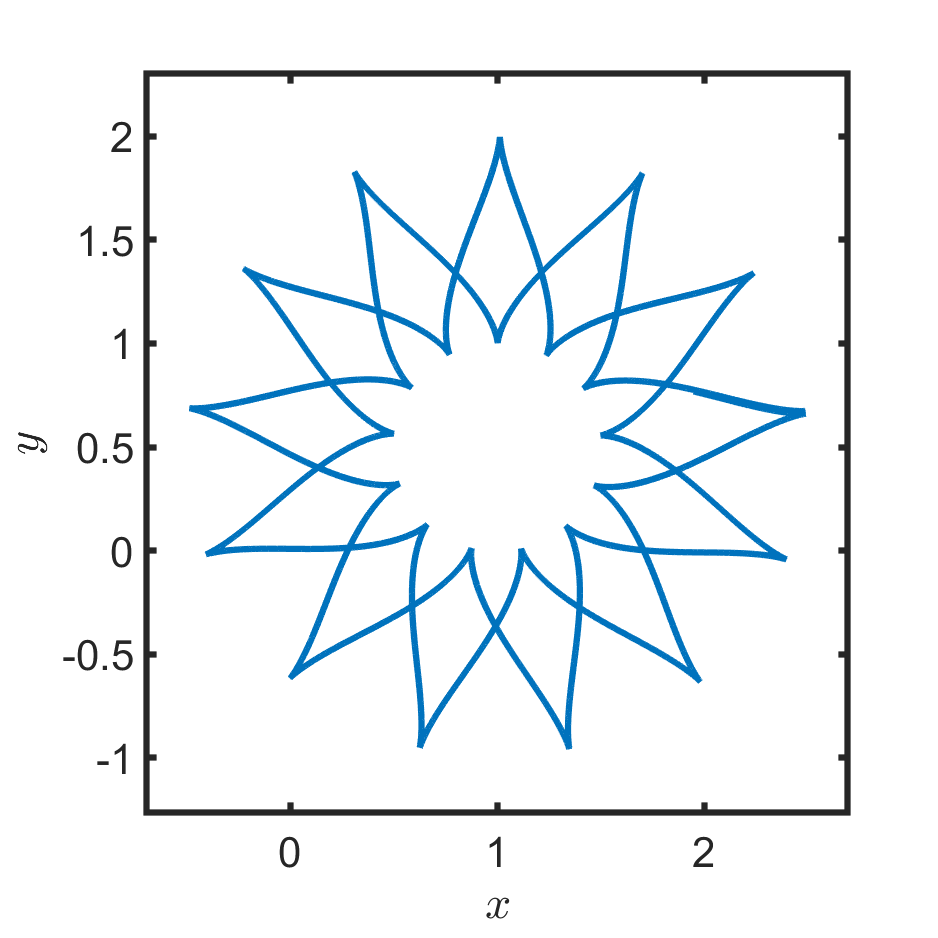}
    \caption{Three trajectories of the Chaplygin sleigh with constant $B$ for $B < 1$, $B=1$, and $B>1$.}
    \label{fig:spiro}
\end{figure}

To solve the optimal control problem we construct the extended Hamiltonian. 

\begin{definition}
For $(q,p)\in \mathcal{D}'$, let $(q,p,p_q,p_p)$ be the induced coordinates on $T^*\mathcal{D}'$.
Then the extended Hamiltonian on $T^*\mathcal{D}'$ is given by
    \begin{gather}\label{eq:extende_Hamiltonian}
        \tilde{H}:\mathcal{U}\times \mathcal{D}'\to\mathbb{R}\\
        (B, q, p, p_q, p_p) \mapsto \ell(q,p;{B}) + \langle (p_q, p_p) , X_{\mathcal{B}} \rangle. \nonumber
    \end{gather}
\end{definition}

To determine the optimal control we need minimize $\Tilde{H}$. We will make the assume that 
\begin{equation}\label{eq:optimal_control}
    \frac{\partial \Tilde{H}}{\partial B} = 0 
\end{equation}
has a unique solution and completely determines the optimal control.

We will call the optimal Hamiltonian $H_{opt}:= \min_{B}\Tilde{H}$. An optimal trajectory is given be an integral curve of Hamilton's equations 
\begin{equation*}
\begin{split}
     \dot{q} &= \frac{\partial H_{opt}}{\partial p_q }, \quad \dot{p}_q = -\frac{\partial H_{opt}}{\partial q },\\
     \dot{p} &= \frac{\partial H_{opt}}{\partial p_p}, \quad \dot{p}_p = - \frac{\partial H_{opt}}{\partial p},
\end{split}
\end{equation*}

subject to the prescribed boundary conditions
\begin{equation*}
    \begin{array}{cc}
    q(0) = q_0, & q(T) = q_f, \\
    p(0) = p_0, & p(T) = p_f.
    \end{array}
\end{equation*}

In order to simplify notation let $(q, p) = x$, $(p_q, p_p) = y$, and the Hamiltonian vector field by $Y = (\dot{x}, \dot{y})$. 
Letting $\Omega = dx \wedge dy$ be the cannonical symplectic form on $T^*\mathcal{D}'$, the optimal control equations can be written as:
\begin{equation*}
    i_Y\Omega = dH_{opt}.
\end{equation*}
\begin{proposition}\label{prop:2_const_of_motion}
The extended optimal control Hamiltonian flow for a magnetically controlled system has at least two constants of motion i.e. there exists $E_1, E_2 : T^*\mathcal{D}' \rightarrow \mathbb{R}$ such that $\mathcal{L}_YE_i = 0, \ i \in \{1, 2\}$.

\end{proposition}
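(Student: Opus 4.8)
The plan is to exhibit the two conserved quantities explicitly: take $E_1 = H_{opt}$ and $E_2 = \tau^* H$, where $\tau\colon T^*\mathcal{D}' \to \mathcal{D}'$, $(x,y)\mapsto x$, is the bundle projection and $H$ is the original Hamiltonian appearing in \eqref{eq:eom_general_hamiltonian} (for the reduced system \eqref{eq:magnetic_Lie-Poisson} one uses $h$ in place of $H$).

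That $E_1$ is conserved is immediate: $Y$ is by construction the Hamiltonian vector field of $H_{opt}$, i.e.\ $i_Y\Omega = dH_{opt}$, so $\mathcal{L}_Y H_{opt} = i_Y\,dH_{opt} = \Omega(Y,Y) = 0$ by skew-symmetry of $\Omega$.

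The crux is $E_2$. First I would identify the base component $\dot x$ of $Y$. Let $B^* = B^*(x,y)$ denote the unique minimizer supplied by \eqref{eq:optimal_control} (smooth in $(x,y)$ by the implicit function theorem, which is where the nondegeneracy implicit in ``completely determines'' enters), so that $H_{opt}(x,y) = \tilde H\big(B^*(x,y),x,y\big)$. Differentiating in $y$ and using the envelope identity $\partial\tilde H/\partial B\big|_{B^*} = 0$,
\[
\dot x \;=\; \frac{\partial H_{opt}}{\partial y} \;=\; \frac{\partial \tilde H}{\partial y}\bigg|_{B=B^*} \;=\; X_{\mathcal{B}^*}(x),
\]
the last equality holding because $\ell$ does not involve $y$ and $\langle y, X_{\mathcal{B}}(x)\rangle$ is linear in $y$. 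Hence the $\mathcal{D}'$-component of the extended optimal flow is exactly the magnetic nonholonomic flow \eqref{eq:eom_general_hamiltonian} driven by the (now time-dependent) optimal field $\mathcal{B}^*(t)$. Next I would observe that the proof of Proposition \ref{prop:energy_conservation_magnetic} is pointwise in $\mathcal{B}$: contracting \eqref{eq:eom_general_hamiltonian} with $X_{\mathcal{B}}$ and using skew-symmetry of $\omega_{\mathcal{B}}$ together with $(\pi_Q)_* X_{\mathcal{B}} \in \mathrm{Ann}(\eta_i)$ yields $dH(X_{\mathcal{B}}) = 0$ for \emph{every} value of the field. Therefore along an optimal trajectory $\tfrac{d}{dt}E_2 = dH(\dot x) = dH\big(X_{\mathcal{B}^*(t)}(x(t))\big) = 0$, i.e.\ $\mathcal{L}_Y E_2 = 0$; equivalently $\{H_{opt}, H\} = 0$.

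Finally, for genuine functional independence of the two invariants, note that $E_2 = \tau^* H$ is independent of the costate $y = (p_q,p_p)$, whereas $H_{opt}$ contains the term $\langle y, X_{\mathcal{B}^*}(x)\rangle$ and so depends nontrivially on $y$ wherever $X_{\mathcal{B}^*}\neq 0$; thus $dE_1$ and $dE_2$ are linearly independent on an open dense subset of $T^*\mathcal{D}'$. I expect the only real obstacle to be the envelope/differentiation step — one must be sure $B^*$ is a bona fide smooth function of $(x,y)$ so that the chain rule applies — together with the accompanying remark that Proposition \ref{prop:energy_conservation_magnetic} survives the passage to a time-varying field, which it does precisely because its conclusion is established pointwise for each fixed value of $\mathcal{B}$.
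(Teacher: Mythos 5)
Your proof is correct and takes essentially the same route as the paper: both choose $E_1 = H_{opt}$ and $E_2 = \pi_{\mathcal{D}'}^*H$ and reduce conservation of the latter to Proposition \ref{prop:energy_conservation_magnetic} evaluated at the optimal field. Your envelope-theorem computation simply makes explicit the step $(\pi_{\mathcal{D}'})_*Y = X_{\mathcal{B}_{opt}}$, which the paper asserts without justification.
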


\begin{proof}
$E_1 = H_{opt}$ is conserved by the definition of $Y$. We now claim that $E_2 =   \pi_{\mathcal{D}'}^*H$ is also a conserved quantity, where $\pi_{\mathcal{D}'}:T^*\mathcal{D}' \rightarrow \mathcal{D}'$ is the canonical projection. 
\begin{equation*}
    \begin{split}
        \mathcal{L}_Y(\pi_{\mathcal{D}'}^*H) = d(\pi_{\mathcal{D}'}^*H)(Y) &= \pi_{\mathcal{D}'}^* dH (Y) \\
        &= dH((\pi_{\mathcal{D}'})_*Y).
    \end{split}
\end{equation*}
But $\pi_{\mathcal{D}'}Y = X_{\mathcal{B}_{opt}}$, where $\mathcal{B}_{opt}$ is the optimal magnetic field that satisfies \eqref{eq:optimal_control}. By Proposition \ref{prop:energy_conservation_magnetic} we know 
$$\mathcal{L}_{X_\mathcal{B}}H = dH(X_\mathcal{B}) = 0 $$
for any $B$, so in particular it must vanish for $B_{opt}$. Hence 
$$ \mathcal{L}_Y(\pi_{\mathcal{D}'}^*H) = dH(X_{\mathcal{B}_{opt}}) = 0. $$
\end{proof}

This result has an important consequence for two dimensional systems.
\begin{corollary}\label{cor:integrable}
    Any magnetically controlled optimal control system on a 2-dimensional manifold is completely integrable i.e. if either \eqref{eq:eom_general_hamiltonian} or \eqref{eq:magnetic_Lie-Poisson} can be reduced to a 2-dimensional problem, the system $(T^*{\mathcal{D}'}, H_{opt}, \Omega)$ is completely integrable.
\end{corollary}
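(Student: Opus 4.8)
The plan is to check the two ingredients of Liouville complete integrability directly on the symplectic manifold $(T^*\mathcal{D}',\Omega)$. By hypothesis the problem has been reduced so that $\mathcal{D}'$ (or its reduced counterpart) is two-dimensional, hence $\dim T^*\mathcal{D}'=4$, and it suffices to produce two first integrals $E_1,E_2:T^*\mathcal{D}'\to\mathbb{R}$ that are in involution, $\{E_1,E_2\}=0$, and functionally independent on an open dense set. Proposition~\ref{prop:2_const_of_motion} already provides the candidates, $E_1=H_{opt}$ and $E_2=\pi_{\mathcal{D}'}^*H$, so the remaining work is only to verify these two properties.

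The involution property comes for free. Since $E_1=H_{opt}$ is the Hamiltonian generating the flow $Y$, the bracket $\{E_1,E_2\}$ equals, up to sign, the directional derivative of $E_2$ along $Y$, i.e.\ $\mathcal{L}_Y\big(\pi_{\mathcal{D}'}^*H\big)$, which vanishes by Proposition~\ref{prop:2_const_of_motion}. Hence $E_1$ and $E_2$ Poisson commute with no further computation.

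It remains to establish functional independence, which is the only place a real argument is needed. The key observation is that $E_2=\pi_{\mathcal{D}'}^*H$ is pulled back from the base, so $dE_2$ annihilates every vector tangent to the fibers of $\pi_{\mathcal{D}'}:T^*\mathcal{D}'\to\mathcal{D}'$; in the coordinates $(x,y)=(q,p,p_q,p_p)$ it carries no $dy$-component. For $E_1$, the optimality condition \eqref{eq:optimal_control} lets one differentiate $H_{opt}=\min_B\tilde H$ through the minimizer by the envelope theorem, yielding $\partial H_{opt}/\partial y=\partial\tilde H/\partial y\big|_{B=B_{opt}}=X_{\mathcal{B}_{opt}}$; thus the $dy$-part of $dE_1$ is exactly $X_{\mathcal{B}_{opt}}$. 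Consequently $dE_1\wedge dE_2\neq 0$ at every point where $X_{\mathcal{B}_{opt}}\neq 0$ and $dH\neq 0$, i.e.\ off the zero-velocity locus, and on this open dense set the system $(T^*\mathcal{D}',H_{opt},\Omega)$ is completely integrable, as claimed.

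The main obstacle, modest as it is, lies entirely in this last step: one must justify differentiating $H_{opt}$ through the minimizing control, which is exactly what the standing assumption around \eqref{eq:optimal_control} (that $B_{opt}$ is uniquely and smoothly determined by the state and costate) buys us, and one must accept the expected exclusion of the singular set where $X_{\mathcal{B}_{opt}}$ or $dH$ degenerates. Everything else — the count of integrals and their involution — follows immediately from Proposition~\ref{prop:2_const_of_motion} together with the four-dimensionality of $T^*\mathcal{D}'$.
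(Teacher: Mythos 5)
Your proof is correct and follows the same basic route as the paper: take $E_1=H_{opt}$ and $E_2=\pi_{\mathcal{D}'}^*H$ from Proposition~\ref{prop:2_const_of_motion} and conclude Liouville integrability on the $4$-dimensional manifold $T^*\mathcal{D}'$. The difference is one of completeness rather than strategy: the paper's proof stops at ``two constants of motion'' and defers the rest to a citation, whereas you actually verify the two hypotheses of the Arnold--Liouville theorem --- involution, which is indeed免 free since $\{H_{opt},\pi_{\mathcal{D}'}^*H\}=\mathcal{L}_Y(\pi_{\mathcal{D}'}^*H)=0$, and functional independence, via the envelope-theorem computation $\partial H_{opt}/\partial y = X_{\mathcal{B}_{opt}}$ combined with the fact that $d(\pi_{\mathcal{D}'}^*H)$ has no fiber component. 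That independence check is the genuinely nontrivial content that the paper glosses over, and your observation that it only holds away from the locus where $X_{\mathcal{B}_{opt}}$ or $dH$ degenerates (together with the reliance on smoothness of $B_{opt}$ from the standing assumption around \eqref{eq:optimal_control}) is an honest caveat that the paper's one-line proof silently absorbs. In short: same two integrals, but your version supplies the argument the corollary actually needs.
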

\begin{proof}
    Since $E_1 = \pi_{\mathcal{D}}^*H$ and $E_2 = H_{opt}$ are constants of motion, the components  $(T^*{\mathcal{D}'},  \Omega, \{E_1, E_2\})$ form an integrable system \cite{completely_integrable_review}.
\end{proof}

\section{CHAPLYGIN SLEIGH IN A MAGNETIC FIELD}\label{sec:chap}
We will apply the theory  to the case of a magnetically controlled Chaplygin sleigh as shown in Fig. \ref{fig:Dora's cool pic}. Such a system can be thought of as an ice skater \cite{figure_skate}. The state space is given by $Q = G = \mathrm{SE}_2$. The coordinates $(x,y,\theta)$ represent the coordinates of the contact point along with its orientation.
We will assume that the electric charge is concentrated at the center of mass and is acted on by a vertical magnetic field of strength $B$. The coordinates of the center of mass are 
$x_c = x+a\cos\theta$, $y_c = y+a\sin\theta$. The magnetic field can be written as:
\begin{equation*}
\begin{split}
    \mathcal{B} &= B dx_c\wedge dy_c \\
    &= B\left( dx\wedge dy + a\cos\theta dx\wedge d\theta + a\sin\theta dy\wedge d\theta\right).
\end{split}
\end{equation*}



In coordinates $(x,y,\theta)$, the mass matrix for the Chaplygin sleigh is: 
\begin{equation*}
	M = \begin{bmatrix}
		m & 0 & -ma\sin\theta \\
		0 & m & ma\cos\theta \\
		-ma\sin\theta & ma\cos\theta & I+ma^2
	\end{bmatrix},
\end{equation*}
and the nonholonomic constraint is given by
$$\dot{y}\cos\theta - \dot{x}\sin\theta= 0,$$
and is equivalent to prohibiting movement perpendicular to the forward orientation.

Using \eqref{eq:eom_general_hamiltonian} the equations of motion are 
\begin{equation*}
	\begin{split}
		&\ddot{x} - a\dot\theta^2\cos\theta - a\ddot\theta\sin\theta + \frac{\lambda}{m}\sin\theta = \\
		&\hspace{1.25in}\frac{eB}{m}(-\dot{y}-a\dot\theta\cos\theta) \\
		&\ddot{y} - a\dot\theta^2\sin\theta + a\ddot\theta\cos\theta-  \frac{\lambda}{m}\cos\theta  = \\ &\hspace{1.25in}\frac{eB}{m}(\dot{x}-a\dot\theta\sin\theta)\\
		&(I+ma^2)\ddot\theta + ma\dot\theta(\dot{x}\cos\theta + \dot{y}\sin\theta)= \\
		&\hspace{1.25in}eB\Big(a\dot{x}\cos\theta + a\dot{y}\sin\theta\Big)
	\end{split}
\end{equation*}

Let $v = \dot{x}\cos\theta + \dot{y}\sin\theta$ be its forward velocity and let $\omega = \dot{\theta}$ be the angular momentum of the ice skater. We then relate $(x, y, \theta)$ to this new coordinate system $(v, \omega)$ through: $\dot{x} = v\cos\theta$, $\dot{y} = v\sin\theta$, and $\dot\theta = \omega$.
In coordinates $(v,\omega)$, the equations of motion reduce to (which is equivalent to the reduced equation \eqref{eq:magnetic_Lie-Poisson}):
\begin{equation*}
    \begin{split}
        	\dot{\omega} &= \frac{1}{I+ma^2}\left[ - ma\omega v + eBav\right], \\
        	\dot{v} &= a\omega^2 - ea \omega B.
    \end{split}
\end{equation*}

Let us non-dimensionalize the system by rescaling $\tilde{\omega} = \omega/\omega_0$, $\tilde{v} = v/v_0$, $\tilde{B} = B/B_0$, and $\tau = t/t_0$ according to Table \ref{table:change_of_coords}, with
$\Omega_0$ being the cycloton frequency; $\Omega_0 = eB_0/m$.
This produces the dimensionless system
\begin{equation*}
    \begin{split}
        \dot{\tilde{v}} &= -c\tilde{\omega}\left(\tilde{B}-\tilde{\omega}\right), \\
        \dot{\tilde{\omega}} &= \tilde{v}\left(\tilde{B}-\tilde{\omega}\right),
    \end{split}
\end{equation*}
where
\begin{equation*}
    c = \frac{ma^2}{I+ma^2}.
\end{equation*}
This is an affine control system in a two dimensional space. By Proposition \ref{prop:2_const_of_motion}, the optimal control Hamiltonian system will be completely integrable. Henceforth, we will drop the tilde on the variables but they will remain dimensionless.

\begin{center}
    \begin{table}
    \centering
    \vspace{0.1in}
        \begin{tabular}{ |c|c| }
            \hline
            Variable & Normalization Factor\\ \hline
            $t$ & $\Omega_0^{-1}$ \\[1.75ex]
            $B$ & $\dfrac{m\Omega_0}{e}$ \\[1.75ex] 
            $v$ & $\dfrac{(I + ma^2)\Omega_0}{ma}$ \\[1.75ex]
            $\omega$ & $\Omega_0$\\
            \hline
        \end{tabular}
    \vspace{4pt}
    \caption{\label{table:change_of_coords} Table showing the normalisation factors required in order to make the system dimensionless}
    \end{table}
\end{center}

Let the running cost be $\ell = 1/2B^2$. The extended Hamiltonian, \eqref{eq:extende_Hamiltonian}, is
\begin{equation}
	\tilde{H} = \frac{1}{2}B^2 + p_v\left(c\omega^2 - c\omega B \right) +p_\omega\left(-\omega v + v B\right),
\end{equation}
which is optimized to be
\begin{equation*}
	H_{opt} = c\omega^2p_v - \frac{1}{2}p_\omega^2v^2 - \frac{1}{2}c^2\omega^2p_v^2 - \omega p_\omega v + c\omega p_\omega p_v v
\end{equation*}
with equations of motion
\begin{equation*}
	\begin{split}
		\dot{v} &= c\omega^2 -c^2 \omega^2 p_v +  c \omega v p_\omega \\
		\dot{\omega} &= -\omega v + c\omega v p_v -  v^2p_\omega \\
		\dot{p}_v &= \omega p_\omega +  vp_\omega^2 -c \omega p_\omega p_v \\
		\dot{p}_\omega &=  vp_\omega - 2c\omega p_v + c^2\omega p_v^2 -cvp_\omega p_v
	\end{split}
\end{equation*}
Even though we have reduced the dimension of the optimal control problem from $6$ to $4$ by passing to the $(v, \omega)$ coordinates, we can further simplify the dynamics using Corollary \ref{cor:integrable}. As these controls are energy-preserving, the original energy is a constant of motion. In terms of $(v, \omega)$ the energy can be written as:
\begin{equation*}
	E = \frac{1}{2}\left(\frac{1}{c}v^2 + \omega^2\right).
\end{equation*}
We use this fact to perform yet another change of coordinates:
\begin{equation*}
     v = \sqrt{c}\cos \alpha, \ \omega = \sin \alpha.
\end{equation*}
The $\alpha$ equation of motion is given by
\begin{equation*}
    \dot{\alpha} = \sqrt{c}(B - \sin\alpha).
\end{equation*}
The extended Hamiltonian for this reduced case is 
\begin{equation*}
    \tilde{H}(B, \alpha, p_\alpha) = \ell(B) + p_\alpha  \sqrt{c}(B - \sin\alpha).
\end{equation*}
If we use $\ell(B) = 1/2B^2$ as earlier, we find the optimal control
\begin{equation*}
    B^* = \arg \min_{B'} \tilde{H}(B',\alpha, p_\alpha)  = - \sqrt{c} p_\alpha.
\end{equation*}
Plugging this into the Hamiltonian gives up $H_{opt}$ in terms of $p_\alpha$ and $\alpha$. We scale $p_\alpha$ by a factor of $-\sqrt{c}$ in order to obtain a Hamiltonian in terms of  $B$ and $\alpha$ so that we can solve the system of equations of motion \eqref{eq:alphadotoptimal}, \eqref{eq:Bdotoptimal} for the magnetic field directly.
This gives
\begin{equation}
\label{eq:alphaH}
    H_{opt}(\alpha, B) = \frac{1}{2} \frac{B^2}{c} - B \sin \alpha,
\end{equation}
and the equations of motion
\begin{align}
\label{eq:alphadotoptimal}
    \dot{\alpha} &= \frac{1}{c}B - \sin\alpha, \\
\label{eq:Bdotoptimal}
    \dot{B} &= B \cos \alpha.
\end{align}

\section{NUMERICAL RESULTS}\label{sec:results}

\begin{figure}
    \centering
    \includegraphics[width=0.48\textwidth]{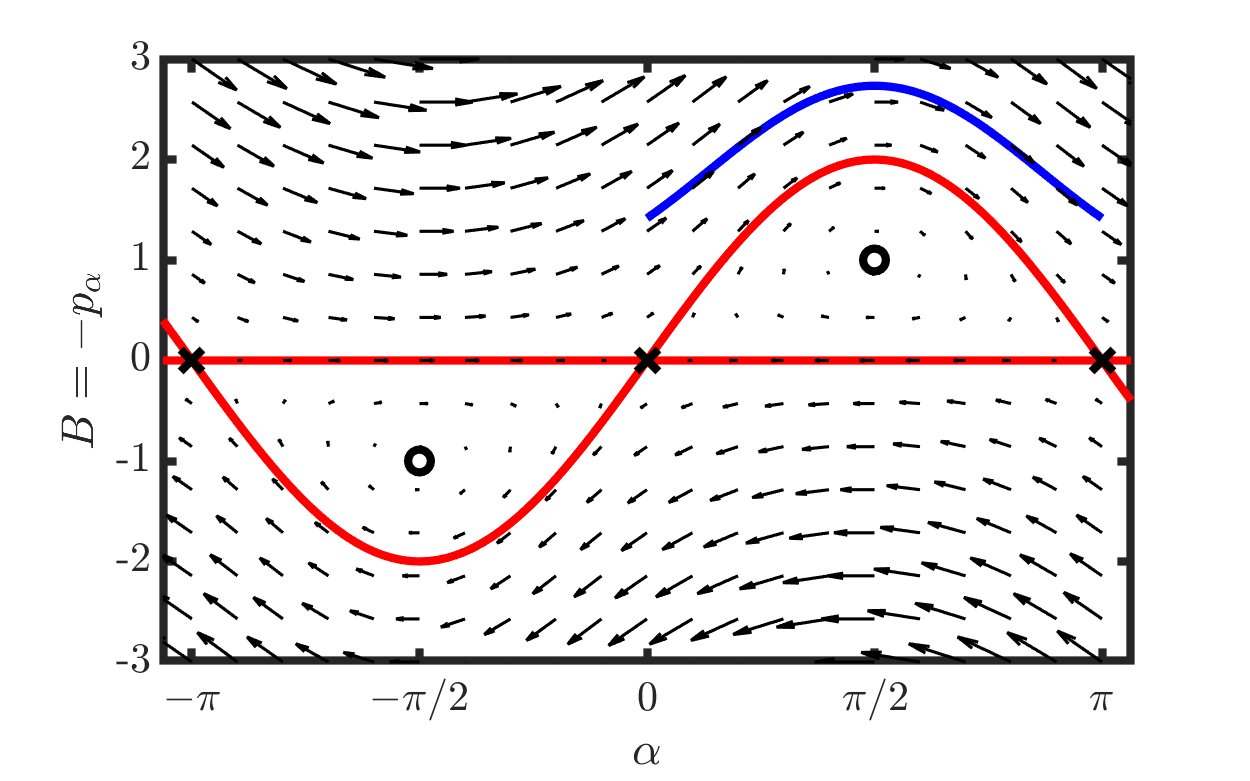}
    \caption{The phase portrait for the controlled Hamiltonian \eqref{eq:alphaH} with $c=1$. The stable equilibria are marked with circles, and the unstable equilibria are marked with `x's. The homoclinic orbits $P = \pm\sin\alpha$ are graphed in red. The optimal trajectory in Fig.~\ref{fig:optimal_trajectories} is marked in blue.}
    \label{fig:phase_portrait}
\end{figure}

We begin by discussing the phase portrait of the optimized Hamiltonian \eqref{eq:alphaH} (see Fig.~\ref{fig:phase_portrait}). There are four critical points of the control for $\alpha \in S^1$ and $B \in \mathbb{R}$. Two stable critical points occur at $(\alpha, B) = (\pm \pi/2, \pm 1)$, corresponding to the Chaplygin sleigh rotating in place with a constant angular velocity $\omega = \pm c^{-1}$. Two unstable points occur at $(0, 0)$ and $(\pi, 0)$, corresponding to the sleigh moving in a straight forwards or backwards respectively. 

There are two heteroclinic orbits connecting the unstable points, defined by the equations $B=0$ and $B=2 \sin \alpha$. The $B=0$ heteroclinic orbit reduces to the uncontrolled system, where the Chaplygin is attracted to going straight, i.e.~$\dot{\alpha} = - \sin \alpha$. The $B=2\sin \alpha$ orbit reverses time in that system, giving $\dot{\alpha} = \sin \alpha$. This means that if we use this orbit, the optimal control orbit effectively makes the backwards direction stable and the forwards direction unstable. 

In order to have controllability in this system, the value of the Hamiltonian \eqref{eq:alphaH} must exceed the Hamiltonian value of the connecting orbits, or
\begin{equation*}
    \frac{ B^2}{2c} - B \sin \alpha > 0.
\end{equation*}
This implies that there is a minimum value of the magnetic field for controllability
\begin{equation}
\label{eq:minB}
    \abs{B} > \max_{\alpha} (2 c\sin \alpha) = 2c.
\end{equation}

Now, we consider the problem 
\begin{gather}\label{eq:fixedtimeproblem}
    J = \min_{B} \, \int_0^T \, \frac{1}{2}B^2\, dt, \\
    \alpha(0) = 0, \quad \alpha(T) = \pi, \nonumber
\end{gather}
corresponding to the problem of turning the Chaplygin sleigh around from a straight forward trajectory in a fixed amount of time $T$. Solutions to this problem are symmetric with $(\alpha, B) \mapsto (-\alpha, -B)$, so we only consider $B > 0$.  In order to do this numerically, we begin by noting that (\ref{eq:alphadotoptimal}, \ref{eq:Bdotoptimal}) can be combined to find the equation for $\alpha$ as
\begin{equation*}
    \ddot{\alpha} = \frac{1}{2} \sin(2\alpha),
\end{equation*}
i.e., the evolution of $\alpha$ in time exactly matches the nonlinear pendulum with frequency $2$ and a $\pi/2$ phase shift. We note that this result holds even for any value of $c$. This means that the time to get from our two end points can be given exactly by the formula
\begin{align}
\label{eq:Telliptic}
    T &= \frac{K(-B(0)^{-2})}{\sqrt{|B(0)|}},
\end{align}
where $K$ is the complete elliptic integral of the first kind. Then, using the fact that $T$ is monotonic in $B(0)$, we can solve \eqref{eq:Telliptic} using a bracketed root-finding technique with a high-accuracy elliptic integral for an initial $B(0)$ near machine precision. If we do not consider solutions that do a single half turn, this gives a unique solution to the Pontryagin optimal control for \eqref{eq:fixedtimeproblem}.

\begin{figure}
    \centering
    \includegraphics[width=0.48\textwidth]{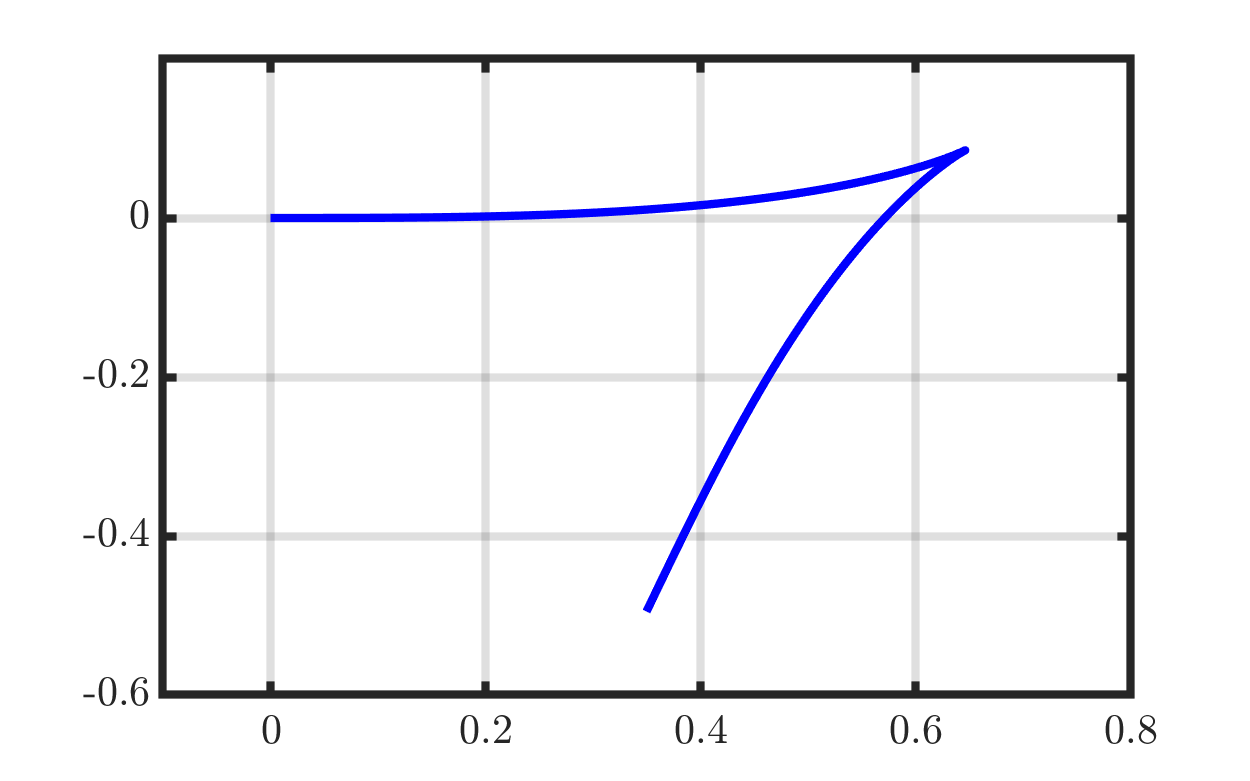}
    \caption{Spatial $(x,y)$ plots of the optimal trajectories of the Chaplygin sleigh. The trajectory in terms of $\alpha$ and $B$ is shown as the blue line Fig.~\ref{fig:phase_portrait}}
    \label{fig:optimal_trajectories}
\end{figure}

In Fig.~\ref{fig:optimal_trajectories}, we plot an example optimal trajectory for \eqref{eq:fixedtimeproblem} in the $(x,y)$-plane. We solve for the initial magnetic field $B(0)$ for the time $T=2$ using MATLAB's \texttt{fzero} and \texttt{ellipticK} functions for root finding and evaluating the elliptic integral respectively. The solution to this is evolved via a Runge-Kutta scheme (\texttt{ode45}) to obtain the spatial path, with initial conditions $(x,y,\theta) = (0,0,0)$.

\begin{figure}
    \centering
    \includegraphics[width=0.48\textwidth]{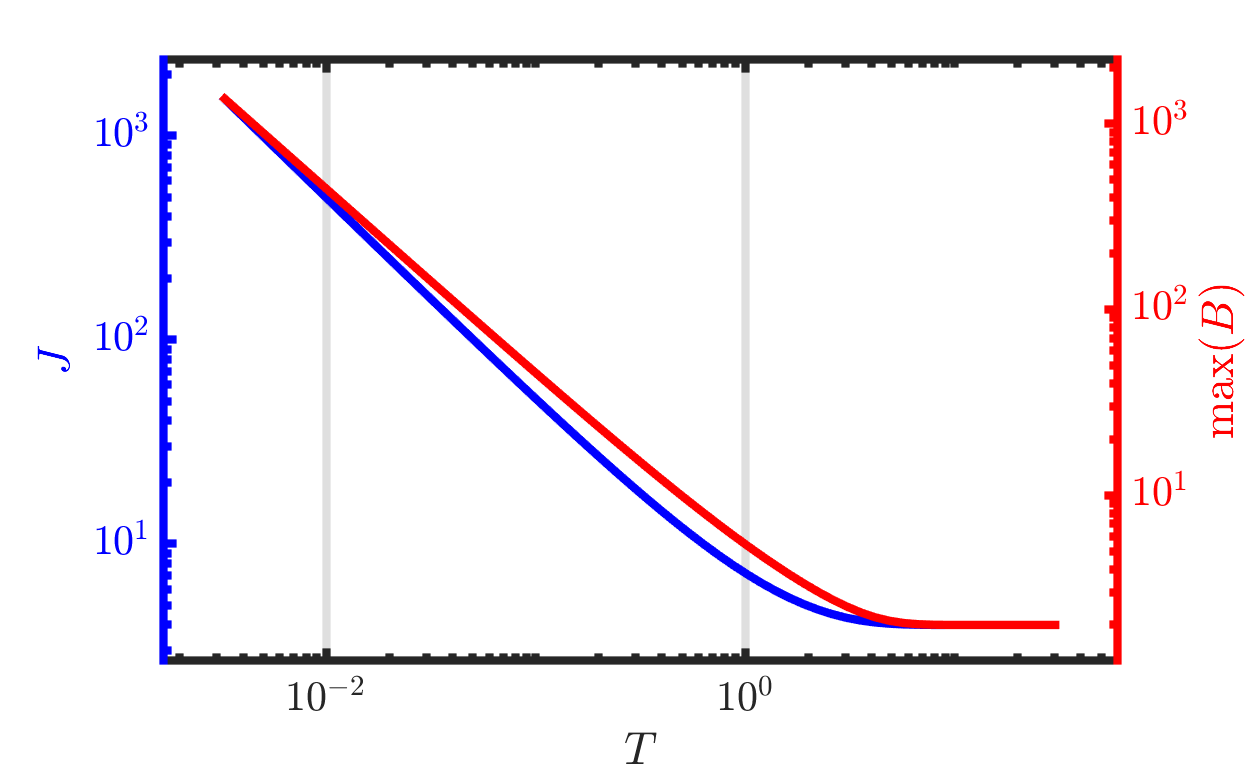}
    \caption{Cost of to turn the sleigh around as a function of final time. }
    \label{fig:cost_vs_time}
\end{figure}

In Fig.~\ref{fig:cost_vs_time}, we plot both the cost and the maximum magnetic field over the trajectories for a range of $T$ values between $T=10^{-1.5}$ through $T=10^{2.5}$. The cost was obtained by integrating over the trajectory using \texttt{ode45}, and the maximum magnetic field was found by noting that the Hamiltonian $B$ is maximized at $\alpha = \pi/2$ and using conservation of \eqref{eq:alphaH} to find
\begin{equation*}
    \max B = 1 + \sqrt{1 + B(0)^2}. 
\end{equation*}
We see that the maximum magnetic field decreases as a function of $T$ and is asymptotic to the minimum value of $2$ from \eqref{eq:minB}. Additionally, we see that the cost is decreasing as $T$ increases. We can find the value it converges to by analytically evaluating the cost of the heteroclinic orbit $B = 2\sin \alpha$:
\begin{align*}
    J_{\min} = \int_{-\infty}^{\infty}\frac{1}{2}B^2 dt = 4.
\end{align*}
\section{CONCLUSIONS}\label{sec:conclusions}

    

We investigated the optimal control problem for nonholonomic Hamiltonian systems subject to a magnetic field. An example of an electrically charged Chaplygin Sleigh was presented. Due to the energy-preserving property of the magnetic controls, the resulting optimal control problem is always completely integrable; in our specific example, the equations of motion were equivalent to the nonlinear pendulum and solutions were found via elliptic integrals.

A direction for future work is on the question of controllability when the magnetic field is underactuated. 
In this work, we assume that we can reach any location in a given time. 
That is, for any pair of points $x,y\in \mathcal{D}'$ and time $T>0$, does there exist a control law that drives the system from $x$ to $y$ while obeying the magnetic nonholonomic equations of motion?
Whether or not this is actually possible is generally nontrivial.

Another possible research direction, specifically for the Chaplygin sleigh, is on trajectory-tracing as discussed in \cite{figure_skate}. As energy is preserved, this places bounds on the maximum angular velocity which will make tracing an arbitrary path impossible. 

A final immediate research direction is to extend this procedure to either relativistic or quantum systems.



\addtolength{\textheight}{-12cm}   





\section*{ACKNOWLEDGMENT}
We thank Mallory Gaspard for insightful conversations and her enthusiasm.
\newpage

\bibliographystyle{ieeetr}
\bibliography{references}

\end{document}